\newcommand{\Z}{\mathbb{Z}}
\newcommand{\Q}{\mathbb{Q}}
\newcommand{\C}{\mathbb{C}}
\DeclareMathOperator{\Gal}{Gal}
\newtheorem*{theorem*}{Theorem}
\newtheorem{theorem}{Theorem}
\newtheorem{problem}{Problem}
\theoremstyle{remark}
\begin{document}
\title[Irrationality]{Irrationality of growth constants associated with polynomial recursions}
\subjclass[2010]{11J72,11B37} \keywords{Irrationality, polynomial recursion, growth constant}

\author[S. Wagner]{Stephan Wagner}
\address{S. Wagner,
Department of Mathematical Sciences, 
Stellenbosch University,
Private Bag X1,
Matieland 7602, South Africa
and
Department of Mathematics,
Uppsala Universitet,
Box 480,
751 06 Uppsala,
Sweden
}
\email{swagner\char'100sun.ac.za,stephan.wagner\char'100math.uu.se}

\author[V. Ziegler]{Volker Ziegler}
\address{V. Ziegler,
University of Salzburg,
Hellbrunnerstrasse 34/I,
A-5020 Salzburg, Austria}
\email{volker.ziegler\char'100sbg.ac.at}

\begin{abstract}
We consider integer sequences that satisfy a recursion of the form $x_{n+1} = P(x_n)$ for some polynomial $P$ of degree $d > 1$. If such a sequence tends to infinity, then it satisfies an asymptotic formula of the form $x_n \sim A \alpha^{d^n}$, but little can be said about the constant $\alpha$. In this paper, we show that $\alpha$ is always irrational or an integer. In fact, we prove a stronger statement: if a sequence $G_n$ satisfies an asymptotic formula of the form $G_n = A \alpha^n + B + O(\alpha^{-\epsilon n})$, where $A,B$ are algebraic and $\alpha > 1$, and the sequence contains infinitely many integers, then $\alpha$ is irrational or an integer.
\end{abstract}

\maketitle

\section{Introduction}

Integer sequences obtained by polynomial iteration, i.e., sequences that satisfy a recursion of the form
$$x_{n+1} = P(x_n),$$
occur in several areas of mathematics. Several interesting examples can be found in Finch's book \cite{Finch:2003} on mathematical constants, Chapter 6.10.

\medskip

Let us give two concrete examples: the first is the sequence given by $x_0 = 0$ and $x_{n+1} = x_n^2 + 1$ for $n \geq 0$, which is entry A003095 in the On-Line Encyclopedia of Integer Sequences (OEIS, \cite{OEIS}). Among other things, $x_n$ is the number of binary trees whose height (greatest distance from the root to a leaf) is less than $n$. This sequence grows very rapidly: there exists a constant $\beta \approx 1.2259024435$ (the digits are A076949 in the OEIS) such that $x_n = \lfloor \beta^{2^n} \rfloor$. However, this formula is not an efficient way to compute the elements of the sequence, since one needs the whole sequence to evaluate the constant $\beta$ numerically: it is given by
$$\beta = \prod_{n=1}^{\infty} \big(1 + x_n^{-2} \big)^{2^{-n-1}}.$$
Another well-known example is Sylvester's sequence (A000058 in the OEIS), which is given by $y_0 = 2$ and $y_{n+1} = y_n^2-y_n+1$. It arises in the context of Egyptian fractions, $y_n$ being the smallest positive integer for each $n$ such that
$$\frac{1}{y_0} + \frac{1}{y_1} + \frac{1}{y_2} + \cdots + \frac{1}{y_n} < 1.$$ 
Again, there is a pseudo-explicit formula for the sequence: for a constant $\gamma \approx 1.5979102180$, we have $y_n = \lfloor \gamma^{2^n} + \frac12 \rfloor$. However, $\gamma$ can again only be expressed in terms of the elements of the sequence. This is also the reason why little is known about the constants $\beta$ and $\gamma$ in these two examples and generally growth constants associated with similar sequences that satisfy a polynomial recursion.

\medskip

In this short note, we will prove that the constants $\beta$ and $\gamma$ in these examples are---perhaps unsurprisingly---irrational, as are all growth constants associated with similar sequences that follow a polynomial recursion. The precise statement reads as follows:

\begin{theorem}\label{thm:main_recseq}
Suppose that an integer sequence satisfies a recursion of the form $x_{n+1} = P(x_n)$ for some polynomial $P$ of degree $d > 1$ with rational coefficients. Assume further that $x_n \to \infty$ as $n \to \infty$. Set
$$\alpha = \lim_{n \to \infty} \sqrt[d^n]{x_n}.$$
Then $\alpha$ is a real number greater than $1$ that is either irrational or an integer. 
\end{theorem}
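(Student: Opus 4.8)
The plan is to extract a sharp asymptotic expansion for $x_n$ from the recursion and then feed it into the general criterion announced in the abstract. We may assume $\alpha$ is rational, since otherwise there is nothing to prove; and $a_d>0$, because $x_n\to+\infty$ forces $P(x)\to+\infty$ as $x\to+\infty$.

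\emph{Step 1 (asymptotic expansion).} Write $P(x)=a_dx^d\bigl(1+\tfrac{a_{d-1}}{a_d}x^{-1}+O(x^{-2})\bigr)$ and take logarithms:
$$\log x_{n+1}=d\log x_n+\log a_d+\frac{a_{d-1}}{a_d}\,x_n^{-1}+O(x_n^{-2}).$$
Dividing by $d^{n+1}$ and summing the telescoping series (its $k$-th term is $O(d^{-k})$, as $x_k\to\infty$) gives
$$\log x_n=d^n\log\alpha-\frac{\log a_d}{d-1}-\frac{a_{d-1}}{d\,a_d}\,x_n^{-1}+O(x_n^{-2}),$$
where $\log\alpha=\lim_{n\to\infty}d^{-n}\log x_n$; this limit is positive, which is why $\alpha>1$. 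Exponentiating, and substituting the first-order estimate $x_n=A\alpha^{d^n}+O(1)$ into the remaining lower-order terms, we obtain
$$x_n=A\alpha^{d^n}+B+O\bigl(\alpha^{-d^n}\bigr),\qquad A=a_d^{-1/(d-1)},\quad B=-\frac{a_{d-1}}{d\,a_d}.$$
Thus $A$ is the positive real root of $a_dX^{d-1}-1$ — in particular a nonzero algebraic number — and $B\in\Q$.

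\emph{Step 2 (reduction).} This expansion is exactly the hypothesis of the general theorem in the abstract, with the index $n$ replaced throughout by $d^n$ and with $\epsilon=1$: since all $x_n$ are integers, $A\alpha^{d^n}+B$ lies within $O(\alpha^{-d^n})$ of an integer for every $n$, and $A,B$ are algebraic with $\alpha>1$. The proof of that theorem only uses that the main term is close to an integer for infinitely many indices, so it is insensitive to the indices forming the sparse sequence $\{d^n\}$ rather than all of $\N$. We conclude that $\alpha$ is irrational or an integer, as claimed.

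\emph{The main obstacle.} Granting the general theorem, the only real work is Step 1, whose one delicate point — isolating the constant term $B$ together with a power-saving error bound — is handled by the logarithmic iteration above. If one instead wanted an argument not relying on the general theorem, its Diophantine core would be the obstacle. Assuming $\alpha=p/q$ in lowest terms with $q\geq2$, one works in the number field $K=\Q(A)$ with the nonzero element $\delta_n=x_n-B-A\alpha^{d^n}\in K$: it has valuation $\approx-d^n$ at each prime of $K$ above $q$, yet it is archimedeanly of size $O(\alpha^{-d^n})$, and one plays these off against each other via the product formula (a Liouville-type height inequality). The delicate issue is that the error $O(\alpha^{-d^n})$ is only barely small enough, so for $\alpha$ close to $1$ a single application fails; one must iterate, using the self-similarity $\alpha^{d^{n+1}}=(\alpha^{d^n})^{d}$ inherent in the recursion to amplify the bound. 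That bootstrapping is the step I would expect to require genuine care.
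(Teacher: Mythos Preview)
Your proof is correct and follows essentially the same route as the paper: the asymptotic expansion $x_n=A\alpha^{d^n}+B+O(\alpha^{-d^n})$ with $A=a_d^{-1/(d-1)}$ and $B=-a_{d-1}/(da_d)$ is exactly what the paper derives (via a preliminary substitution rather than taking logarithms directly, but the computation is equivalent), and the reduction to the general criterion is identical. One remark on your closing speculation: the paper does \emph{not} prove the general criterion by a Liouville/product-formula bootstrap as you guess, but by an application of the Subspace Theorem, setting up linear forms in $(p^n,q^n,a)$ over the normal closure of $\Q(A,B)$; so the ``delicate bootstrapping'' you anticipate is replaced by a single invocation of a much deeper tool.
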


It is natural to conjecture that the constants $\beta$ and $\gamma$ in our first two examples are not only irrational, but even transcendental. This is not always true for polynomial recurrences in general, though: for example, consider the sequence given by $z_1 = 3$ and $z_{n+1} = z_n^2 - 2$. It is not difficult to prove that
$$z_n = L_{2^n} = \Big( \frac{1 + \sqrt{5}}{2} \Big)^{2^n} + \Big( \frac{1 - \sqrt{5}}{2} \Big)^{2^n}$$
for all $n \geq 1$, where $L_{n}$ denotes the $n$-th Lucas number. Thus the constant $\alpha$ in Theorem~\ref{thm:main_recseq} would be the golden ratio in this example.

\medskip

In the following section, we briefly review the classical method to determine the asymptotic behaviour of polynomially recurrent sequences. Theorem~\ref{thm:main_recseq} will follow as a consequence of a somewhat stronger result, Theorem~\ref{th:irr_crit}. This theorem and its proof, which makes use of the subspace theorem, will be given in Section~\ref{sec:subspace}.

\section{Asymptotic formulas for polynomially recurrent sequences}\label{sec:asymp}

There is a classical technique for the analysis of polynomial recursions. A treatment of the two examples given in the introduction
can already been found in the 1973 paper of Aho and Sloane \cite{AhoSloane:1973} (along with many other examples). See also Chapter 2.2.3 of the book of Greene and Knuth~\cite{GreeneKnuth:1990} for a discussion of the method.

Let the polynomial $P$ in the recursion $x_{n+1} = P(x_n)$ be given by
$$P(x) = c_d x^d + c_{d-1} x^{d-1} + \cdots + c_0.$$
Note that
$$P(x) = c_d \Big( x + \frac{c_{d-1}}{d c_d} \Big)^d + O(x^{d-2}).$$
So if we perform the substitution $y_n = c_d^{1/(d-1)} (x_n + \frac{c_{d-1}}{d c_d})$, the recursion becomes
\begin{align*}
y_{n+1} &= c_d^{1/(d-1)} \Big(P(x_n) + \frac{c_{d-1}}{d c_d} \Big)\\
&= c_d^{d/(d-1)} \Big( x_n + \frac{c_{d-1}}{d c_d} \Big)^d + O(x_n^{d-2}) \\
&= y_n^d + O(y_n^{d-2}).
\end{align*}
Let us assume that $x_n \to \infty$, thus also $y_n \to \infty$. It is easy to see that $x_n$ and $y_n$ are increasing from some point onwards in this case. We can also assume, without loss of generality, that none of the $y_n$ is zero: if not, we simply choose a later starting point. Taking the logarithm, we obtain
$$\log y_{n+1} = d \log y_n + O(y_n^{-2})$$
or equivalently
\begin{equation}\label{eq:O-est}
\log \Big( \frac{y_{n+1}}{y_n^d} \Big)  = O(y_n^{-2}).
\end{equation}
Next express $\log y_n$ as follows:
\begin{align*}
\log y_n &= d \log y_{n-1} + \log \Big( \frac{y_n}{y_{n-1}^d} \Big) =
d^2 \log y_{n-2} + d \log \Big( \frac{y_{n-1}}{y_{n-2}^d} \Big) + \log \Big( \frac{y_n}{y_{n-1}^d} \Big) \\
&= \cdots = d^n \log y_0 + \sum_{k=0}^{n-1} d^{n-k-1} \log \Big( \frac{y_{k+1}}{y_k^d} \Big).
\end{align*}
Extending to an infinite sum (which converges since $\log (y_{k+1}/y_k^d)$ is bounded) yields
$$\log y_n = d^n \Big( \log y_0 + \sum_{k=0}^{\infty} d^{-k-1} \log \Big( \frac{y_{k+1}}{y_k^d} \Big) \Big) - \sum_{k=n}^{\infty} d^{n-k-1} \log \Big( \frac{y_{k+1}}{y_k^d} \Big).$$
Set
$$\log \alpha = \log y_0 + \sum_{k=0}^{\infty} d^{-k-1} \log \Big( \frac{y_{k+1}}{y_k^d} \Big),$$
so that
$$\log y_n = d^n \log \alpha - \sum_{k=n}^{\infty} d^{n-k-1} \log \Big( \frac{y_{k+1}}{y_k^d} \Big).$$
In view of~\eqref{eq:O-est} and the fact that $y_n \leq y_{n+1} \leq \cdots$ for sufficiently large $n$, this gives
$$\log y_n = d^n \log \alpha + O(y_n^{-2}),$$
and thus finally
$$y_n = \alpha^{d^n} + O\big(\alpha^{-d^n}\big).$$
This means that
$$x_n = c_d^{-1/(d-1)} \alpha^{d^n} - \frac{c_{d-1}}{d c_d} + O\big(\alpha^{-d^n}\big).$$

\section{Application of the subspace theorem}\label{sec:subspace}

We now combine the asymptotic formula from the previous section with an application of the subspace theorem to prove our main result on polynomial recursions. In fact, we first state and prove a somewhat stronger result that implies Theorem~\ref{thm:main_recseq}.

\begin{theorem}\label{th:irr_crit}
Assume that the sequence $G_n$ attains an integral value infinitely often, and that it satisfies an asymptotic formula of the form
 $$G_n=A \alpha^n+B+O(\alpha^{-\epsilon n}),$$
where $\alpha > 1$, $A$ and $B$ are algebraic numbers with $A \neq 0$, $\epsilon > 0$, and the constant implied by the $O$-term does not depend on $n$. Then the number $\alpha$ is either irrational or an integer. 
\end{theorem}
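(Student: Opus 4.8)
Suppose for contradiction that $\alpha$ is rational but not an integer; write $\alpha = a/b$ in lowest terms with $b > 1$. The idea is to exploit the fact that $G_n$ is extremely close to $A\alpha^n + B$ whenever $G_n$ happens to be an integer, and to turn this into a Diophantine statement about the $S$-unit $\alpha^n = a^n/b^n$. First I would reduce to a clean setting: since $A$ and $B$ are algebraic, fix a number field $K$ containing $A$, $B$, and work with the finite set $S$ of places of $K$ consisting of the archimedean ones together with those dividing $a$ and $b$. Whenever $n$ is an index with $G_n \in \Z$, the quantity $A a^n + B b^n - G_n b^n$ equals $b^n(A\alpha^n + B - G_n) = O(b^n \alpha^{-\epsilon n}) = O((b^{1+\epsilon}/a^{\epsilon})^n)$; the point is that for large $n$ this tends to $0$ unless $b^{1+\epsilon} \ge a^\epsilon$, and even then it is much smaller than the ``generic'' size $\max(|A| a^n, |B| b^n)$ of the expression, because $b < a$. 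So the near-integrality of $G_n$ forces the linear form $A a^n + B b^n - (G_n) b^n$ in the three quantities $a^n$, $b^n$, $b^n$ to be abnormally small.

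The main tool will be the Subspace Theorem (in the $S$-unit / linear forms version, as in Schmidt or Evertse). I would apply it to the linear forms $L_1 = X$, $L_2 = Y$, $L_3 = X + Y/A \cdot(\text{something})$ — more carefully, to the three variables $(x,y,z) = (a^n, b^n, G_n)$, which are $S$-integers, using the linear forms that isolate $A x + B y - z \cdot (\text{denominator})$ at the archimedean places and the trivial forms $x$, $y$, $z$ at the remaining places, plus $p$-adic forms to account for the valuations of $a^n$ and $b^n$ at primes dividing $a$ and $b$. The product of these forms, evaluated at the integer points coming from indices $n$ with $G_n \in \Z$, must be bounded below by a power of the height (here the height of $(a^n, b^n, G_n)$ is comparable to $a^n$, since $|G_n| \asymp |A|(a/b)^n \le a^n$ and $b^n \le a^n$); but the near-integrality estimate above shows the product is $O(a^{-\delta n})$ for some $\delta > 0$ coming from $\epsilon$ and from $b < a$. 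This contradicts the Subspace Theorem's lower bound, \emph{provided} the integer points in question are not all contained in finitely many proper subspaces.

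The heart of the argument — and the step I expect to be the main obstacle — is therefore ruling out the degenerate case where infinitely many of the relevant points $(a^n, b^n, G_n)$ lie on a single proper subspace, i.e.\ satisfy a fixed linear relation $\lambda a^n + \mu b^n + \nu G_n = 0$. Substituting $G_n = A\alpha^n + B + O(\alpha^{-\epsilon n})$ and multiplying by $b^n$ gives $(\lambda + \nu A) a^n + (\mu + \nu B) b^n = O((b^{1+\epsilon}/a^\epsilon)^n)$; since $a > b \ge 2$ and the two exponential terms $a^n$, $b^n$ have different growth rates, letting $n \to \infty$ forces $\lambda + \nu A = 0$ and then $\mu + \nu B = 0$, so the relation is the trivial one coming from the asymptotic formula itself — which does not give a genuine proper subspace unless the error term vanishes identically. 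One must handle that exceptional possibility (error term eventually zero, so $G_n = A\alpha^n + B$ exactly) separately: there, integrality of $G_n$ for infinitely many $n$ directly forces $b^n \mid$ (bounded quantity), impossible for $b > 1$ unless $A$ has a compensating denominator, and chasing that through shows $\alpha$ must be an integer after all. Assembling these pieces — the Subspace Theorem application, the non-degeneracy verification, and the exact-formula edge case — yields the contradiction and hence the theorem; Theorem~\ref{thm:main_recseq} then follows by taking $G_n = x_n$ (a genuine integer sequence, so integral for \emph{all} $n$) with the asymptotic expansion $x_n = c_d^{-1/(d-1)}\alpha^{d^n} - \frac{c_{d-1}}{dc_d} + O(\alpha^{-d^n})$ derived in Section~\ref{sec:asymp}, reindexed along the subsequence $n \mapsto d^n$.
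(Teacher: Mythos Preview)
Your plan is essentially the paper's proof: assume $\alpha=p/q$ with $q>1$, set up a Subspace Theorem application over a number field containing $A$ and $B$ with $S$ consisting of the archimedean places together with those above the primes of $p$ and $q$, use the asymptotic to make a product of linear forms too small, and then eliminate the finitely many degenerate subspaces by comparing growth rates. The paper carries this out with the triple $(x_1,x_2,x_3)=(p^n,q^n,a)$ where $a=Qq^n(G_n-A\alpha^n-B)$ is the integral error; the small archimedean form is then essentially $x_3$ itself, and the $q$-adic smallness comes from the form $\beta_1x_1+\beta_2x_2+x_3=Qq^nG_n$.

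There is one genuine slip in your setup that you should repair before the argument runs. With the variables $(x,y,z)=(a^n,b^n,G_n)$ as you wrote them, there is no \emph{linear} form in $x,y,z$ isolating the small quantity: what is small is $Aa^n+Bb^n-G_nb^n=Ax+By-zy$, which is quadratic. Your own degeneracy computation reveals the inconsistency: substituting $G_n=A\alpha^n+B+O(\alpha^{-\epsilon n})$ into $\lambda a^n+\mu b^n+\nu G_n=0$ and multiplying by $b^n$ actually gives
\[
\lambda(ab)^n+\mu b^{2n}+\nu A\,a^n+\nu B\,b^n=O\bigl((b^{1+\epsilon}/a^{\epsilon})^n\bigr),
\]
not $(\lambda+\nu A)a^n+(\mu+\nu B)b^n$ as you claimed. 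The fix is immediate: take the third variable to be $G_nb^n$ (or, as the paper does, the integral error itself); then both the linear-form setup and your degeneracy argument go through exactly as you sketched them, and the height is still $\asymp a^n$. With that correction your plan and the paper's proof coincide, and you are if anything slightly more careful in flagging the ``exact formula'' edge case $G_n=A\alpha^n+B$, which the paper absorbs into its final one-line contradiction.
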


In order to prove the irrationality of $\alpha$ we make use of the following version of the subspace theorem, which is most suitable for our purposes (cf. \cite[Chapter V, Theorem 1D]{Schmidt:1991}).

\begin{theorem}[Subspace theorem]
Let $K$ be an algebraic number field and let
$S \subset M(K)=\{\text{canonical absolute values of}\; K\}$ be a finite set of absolute
values which contains all of the Archimedian ones. For each $\nu \in S$
let $L_{\nu,1}, \ldots, L_{\nu,N}$ be $N$ linearly independent linear
forms in $n$ variables with coefficients in $K$. Then for given
$\delta>0$, the solutions of the inequality
\[\prod_{\nu \in S} \prod_{i=1}^N |L_{\nu ,i}(\mathbf x) |_{\nu}^{n_\nu} <
\overline{|\mathbf x|}^{-\delta}\]
with $\mathbf x = (x_1,x_2,\ldots,x_N) \in \mathfrak a_K^N$ ($\mathfrak a_K$ being the maximal order of $K$) and $\mathbf x \neq \mathbf 0$, where
$|\cdot|_{\nu}$ denotes the valuation corresponding to $\nu$, $n_{\nu}$ is the
local degree, and
\[\overline{|\mathbf x|}= \max_{1 \leq i \leq N \atop 1 \leq j
\leq \deg K}|x_i^{(j)}|,\]
the maximum being taken over all conjugates $x_i^{(j)}$ of all entries $x_i$ of $\mathbf x$, lie in finitely many proper subspaces of $K^N$.
\end{theorem}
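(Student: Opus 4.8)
The statement we have just displayed is W.\,M.\ Schmidt's Subspace Theorem, in the form of \cite[Chapter~V, Theorem~1D]{Schmidt:1991}; it is one of the central results of Diophantine approximation, and in this note we use it as a black box rather than prove it, a self-contained proof being far beyond the scope of a short paper. Nevertheless, let us indicate the shape that a proof takes, since the structure is classical: it extends Roth's method (the case $N=2$ over $\Q$) from a single linear form in two variables to $N$ linearly independent linear forms in $N$ variables over a number field $K$, with the Archimedean and finitely many non-Archimedean absolute values in $S$ treated simultaneously via the product formula.

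The argument proceeds by contradiction. Assuming that the solutions $\mathbf x$ of the inequality do \emph{not} lie in finitely many proper subspaces, one first extracts an infinite sequence of solutions $\mathbf x_1, \mathbf x_2, \dots$ in sufficiently general position: suitably spaced members are linearly independent, and the heights $\overline{|\mathbf x_j|}$ may be assumed to grow extremely rapidly. The technical heart is then the construction, by a pigeonhole (Siegel's lemma) argument, of a nonzero auxiliary polynomial $Q$ in $m$ blocks of $N$ variables, multihomogeneous of prescribed large and rapidly decreasing degrees $r_1, \dots, r_m$ in the successive blocks and with algebraic integer coefficients of controlled size, that vanishes to high order --- measured by an appropriately weighted \emph{index} --- at a well-chosen point $(\mathbf x_{j_1}, \dots, \mathbf x_{j_m})$. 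Crucially, one expands $Q$ with respect to the forms $L_{\nu,i}$ rather than the coordinates, which is what makes the Diophantine smallness $\prod_{\nu,i}|L_{\nu,i}(\mathbf x_j)|_\nu^{n_\nu} < \overline{|\mathbf x_j|}^{-\delta}$ exploitable place by place.

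Two opposing estimates on the index then collide. A local analysis, combined with the product formula, shows that a suitable derivative of $Q$ is astronomically small at the chosen point, forcing the index to be \emph{large}; on the other hand, a combinatorial non-vanishing lemma --- Roth's lemma in Schmidt's treatment (Dyson's or the Esnault--Viehweg lemma in other accounts) --- bounds the index from \emph{above} in terms of the $r_i$, provided the degrees and heights decrease fast enough. The two bounds are inconsistent, and the contradiction forces the solutions into finitely many proper subspaces after all. I expect the genuine obstacle to be neither estimate in isolation but the multidimensional bookkeeping that is absent when $N=2$: because the forms $L_{\nu,i}$ vary with $\nu$, one must control the successive minima of the associated convex bodies (Minkowski's second theorem, Mahler's theory of compound convex bodies) and reduce the general solution to one lying in a fixed hyperplane singled out by a ``next-to-last minimum'' direction. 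It is precisely this geometry-of-numbers step, together with the delicate balancing of the degrees $r_i$, that makes the full proof occupy an entire chapter of \cite{Schmidt:1991}, and it is the reason we content ourselves with quoting the result.
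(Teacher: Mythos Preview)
Your treatment matches the paper's: the paper does not prove the Subspace Theorem either, but simply quotes it from \cite[Chapter~V, Theorem~1D]{Schmidt:1991} and uses it as a black box in the proof of Theorem~\ref{th:irr_crit}. Your additional sketch of the Roth--Schmidt method (auxiliary polynomial, index bounds, geometry of numbers) is accurate extra exposition that the paper omits, but there is no divergence in approach.
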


\begin{proof}[Proof of Theorem~\ref{th:irr_crit}]
Let us assume contrary to the statement of Theorem \ref{th:irr_crit} that $\alpha=p/q$ is rational, where $p$ and $q$ are coprime positive integers, $p > q$, and $q \neq 1$. Moreover, assume that their prime factorizations are 
$$p=p_1^{n_1} \dots p_k^{n_k} \quad \text{and} \quad q=q_1^{m_1} \dots q_{\ell}^{m_\ell}.$$
We choose $K$ in the subspace theorem to be the normal closure of $\Q(A,B)$ and write $D=[K:\Q]$. We fix one embedding of $K$ into $\C$ so that we can assume that $K\subseteq \C$. Moreover, let us write $A$ and $B$ as $A=\beta_1/Q$ and $B=\beta_2/Q$, where $\beta_1$ and $\beta_2$ lie in the maximal order $\mathfrak a_K$ of $K$ and $Q$ is a positive integer such that the ideals $(\beta_1,\beta_2)$ and $(Q)$ are coprime.

If $n$ is an index such that $G_n$ is an integer we deduce that there exists an algebraic integer $a$ which may depend on $n$ such that
\begin{equation}\label{eq:G_integral}
 G_n=\frac{\beta_1 p^n+\beta_2 q^n+a}{Q q^n}=A \alpha^n+B+\frac{a}{Q q^n}=A\alpha^n+B+O(\alpha^{-\epsilon n}).
\end{equation}
Since we are assuming that $G_n$ is a rational integer, we can write the algebraic integer $a$ in the form $a=X-\beta_1 p^n-\beta_2q^n$, with $X\in \Z$.
Moreover, we know that 
\begin{equation}\label{eq:a_small}
 |a|<CQ\left(\frac{q^{1+\epsilon}}{p^{\epsilon}}\right)^n,
\end{equation}
where $C$ is the constant implied by the $O$-term.

Assume that $K$ has signature $(r,s)$. We choose
$$S=\{\infty_1,\dots,\infty_{r+s},\mathfrak p_{1,1},\dots,\mathfrak p_{k,t_k},\mathfrak q_{1,1},\dots,\mathfrak q_{t,u_{\ell}}\},$$
where the valuations $\mathfrak p_{i,1},\dots,\mathfrak p_{i,t_i}$ are all valuations lying above $p_i$ for $1\leq i \leq k$ and the valuations $\mathfrak q_{j,1},\dots,\mathfrak q_{j,u_j}$ are all valuations lying above $q_j$ for $1\leq j \leq \ell$. Moreover, let
$$\Gal(K/\Q)=\{\sigma_1,\dots,\sigma_r,\sigma_{r+1},\bar\sigma_{r+1},\dots,\sigma_{r+s},\bar\sigma_{r+s}\},$$
so that the valuation $\infty_i$ is given by $|x|_{\infty_i}=|\sigma_i^{-1} x|$, where $|\cdot|$ is the usual absolute value of $\C$. Finally, denote the conjugates of $\beta_1$ and $\beta_2$ by $\beta_j^{(i)} = \sigma_i(\beta_j)$. We have the formula
$$|x_1\beta_1^{(i)}+x_2\beta_2^{(i)}+x_3|_{\infty_i}=|x_1\beta_1+x_2\beta_2+x_3|$$
for arbitrary rational numbers $x_1,x_2,x_3$.

Next, we construct suitable linear forms to apply the subspace theorem. Let us write
$x_1=p^n$, $x_2=q^n$ and $x_3=a$, thus $N=3$. We choose our linear forms as $L_{\nu,1}(\mathbf x)=x_1, L_{\nu,2}(\mathbf x)=x_2$ for all $\nu\in S$ and $L_{\nu,3}(\mathbf x)=x_3$ if $\nu$ lies above one of the valuations $p_1,\dots,p_k$. We choose $L_{\nu,3}(\mathbf x)=\beta_1 x_1+\beta_2 x_2+x_3$ if $\nu$ lies above one of the valuations $q_1,\dots,q_\ell$. Finally if $\nu=\infty_i$ then we put  $L_{\infty_i,3}(\mathbf x)=(\beta_1-\beta_1^{(i)}) x_1+(\beta_2-\beta_2^{(i)}) x_2+x_3$.

Using the product formula (cf. \cite[pages 99--100]{Lang:ANT}) and trivial estimates we obtain
\begin{align*}
 \prod_{\nu\in S} |L_{\nu,1}(\mathbf x)|_\nu^{n_\nu}&=1,&
 \prod_{\nu\in S} |L_{\nu,2}(\mathbf x)|_\nu^{n_\nu}&=1,\\
 \prod_{\nu|q_j \atop 1\leq j\leq \ell} |L_{\nu,3}(\mathbf x)|_\nu^{n_\nu}&\leq q^{-Dn},&
 \prod_{\nu|p_i \atop 1\leq i\leq k} |L_{\nu,3}(\mathbf x)|_\nu^{n_\nu}&\leq 1.
\end{align*}
Thus we are left to compute the quantities $|L_{\infty_i,3}(\mathbf x)|_{\infty_i}$. We obtain
\begin{align*}
|L_{\infty_i,3}(\mathbf x)|_{\infty_i} &=|(\beta_1-\beta_1^{(i)}) x_1+(\beta_2-\beta_2^{(i)}) x_2+x_3 |_{\infty_i}\\
&= |\beta_1 p^n+\beta_2 q^n+a-\beta_1^{(i)}p^n-\beta_2^{(i)}q^n|_{\infty_i}\\
&=|X-\beta_1^{(i)}p^n-\beta_2^{(i)}q^n|_{\infty_i}\\
&=|X-\beta_1 p^n-\beta_2 q^n|=|a|.
\end{align*}
Combining all inequalities, we have
\begin{equation}\label{eq:prod_bound}
 \prod_{\nu \in S} \prod_{i=1}^3 |L_{\nu ,i}(\mathbf x) |_{\nu}^{n_\nu}\leq q^{-Dn}|a|^D< (CQ)^D\left(\frac{q}{p}\right)^{\epsilon Dn}.
\end{equation}
Now choose $\delta>0$ small enough so that
$$\left(\frac{q}{p}\right)^{\epsilon D}<p^{-\delta}.$$
In view of~\eqref{eq:a_small}, the inequality $|a|_\nu \leq p^n$ holds for all valuations $\nu$ lying above $\infty$ for sufficiently large $n$, so that $\overline{|\mathbf x|} = |x_1| = p^n$. Hence we obtain
$$(CQ)^D\left(\frac{q}{p}\right)^{\epsilon Dn}<(p^n)^{-\delta}=\overline{|\mathbf x|}^{-\delta}$$
for sufficiently large $n$. In view of \eqref{eq:prod_bound} we have shown that
\begin{equation}\label{eq:ST}
\prod_{\nu \in S} \prod_{i=1}^n |L_{\nu ,i}(\mathbf x) |_{\nu}^{n_\nu} <
\overline{|\mathbf x|}^{-\delta}.
\end{equation}

By the subspace theorem all solutions $x_1,x_2$ and $x_3$ to \eqref{eq:ST} lie in finitely many subspaces of $K^3$. Since by assumption there are infinitely many solutions, there exists one subspace $T\subseteq K^3$ which contains infinitely many solutions. Let $T$ be defined by
$t_1x_1+t_2x_2+t_3x_3=0$, with fixed algebraic integers $t_1,t_2,t_3\in \mathfrak a_K$. Then there must be infinitely many integers $n$ such that $t_1p^n+t_2q^n+t_3a=0$ which is in contradiction to \eqref{eq:a_small} and the assumption that $p > q > 1$. Thus we can conclude that $\alpha$ cannot be rational, unless $q=1$ so that $\alpha$ is an integer.

\end{proof}

Now the proof of Theorem~\ref{thm:main_recseq} is straightforward.

\begin{proof}[Proof of Theorem~\ref{thm:main_recseq}]
As derived in Section~\ref{sec:asymp}, if an integer sequence satisfies a recursion of the form $x_{n+1} = P(x_n)$ for some polynomial $P$ of degree $d > 1$ with rational coefficients and $x_n \to \infty$ as $n \to \infty$, then an asymptotic formula
of the form
$$x_n = A \alpha^{d^n} + B + O(\alpha^{-d^n})$$
holds. If $\alpha$ is rational, but not an integer, then we have an immediate contradiction with Theorem~\ref{th:irr_crit}.
\end{proof}

\section{Further generalizations}

Let us remark that Theorem \ref{th:irr_crit} can be extended to number fields:

\begin{theorem}\label{th:general-irr-crit}
Let $L$ be a number field and $\mathfrak a_L$ its maximal order.
 Assume that the sequence $G_n$ attains values in $\mathcal O_L$ infinitely often, and that it satisfies an asymptotic formula of the form
 $$G_n=A \alpha^n+B+O(|\alpha|^{-\epsilon n}),$$
where $|\alpha| > 1$, $A$ and $B$ are algebraic numbers with $A \neq 0$, $\epsilon > 0$, and the constant implied by the $O$-term does not depend on $n$. Then the number $\alpha$ is either an algebraic integer in $\mathfrak a_L$ or $\alpha\not\in L$.
\end{theorem}

The proof of this theorem is similar to the proof of Theorem \ref{th:irr_crit}. In particular let $K$ be the normal closure of $L(A,B)$ and assume that $\alpha=p/q$ with $p\in \mathfrak a_L$ and $q\in \Z$ with $q>1$. Then we consider the prime ideal factorizations 
$$(p)=\mathfrak p_1^{n_1} \dots \mathfrak p_k^{n_k} \quad \text{and} \quad (q)=\mathfrak q_1^{m_1} \dots \mathfrak q_{\ell}^{m_\ell}$$
in $K$. We can construct the same linear forms as in the proof of Theorem \ref{th:irr_crit} and use the subspace theorem to get a contradiction. 

It is also possible to consider a higher-dimensional variant of Theorem \ref{thm:main_recseq}. Let $f_1,\dots,f_N \in \Z[X_1,\dots,X_N]$ be polynomials of degree $d$. Then we can consider a sequence $(\mathbf x_n)$ with $\mathbf x_n=\left(x^{(1)}_n,\dots,x^{(N)}_n\right)\in \Z^N$ for all $n\geq 0$ satisfying the polynomial recurrence
$$\mathbf x_{n+1}=f(\mathbf x_n)=(f_1(\mathbf x_n),\dots,f_N(\mathbf x_n)).$$
With this notation at hand we pose the following problem:

\begin{problem}
 Assume that $\max \left\{x^{(1)}_n,\dots,x^{(N)}_n\right\} \rightarrow \infty$ as $n\rightarrow \infty$, and let
 $$\alpha=\lim_{n\rightarrow \infty} \sqrt[d^n]{\max\left\{x^{(1)}_n,\dots,x^{(N)}_n\right\}}.$$ 
 Is $\alpha$ necessarily irrational or an integer?
\end{problem}

\end{document}